\theoremstyle{plain}
\newtheorem{theorem}{Theorem}
\newtheorem{proposition}[theorem]{Proposition}
\newtheorem{lemma}[theorem]{Lemma}
\theoremstyle{definition}
\theoremstyle{remark}
\newtheorem{remark}[theorem]{Remark}
\DeclareMathOperator{\im}{Im}
\def\Z{\mathbb{Z}}	
\def\R{\mathbb{R}}	
\renewcommand{\leq}{\leqslant} 		
\renewcommand{\geq}{\geqslant}
\let\ker\relax
\DeclareMathOperator{\ker}{Ker}
\def\cA{\mathcal{A}}
\def\hcA{\hat{\cA}}
\def\cF{\mathcal{F}}
\def\hcF{\hat{\cF}}
\begin{document}

\title{Bihamiltonian cohomology of KdV brackets}

\author{Guido Carlet}

\author{Hessel Posthuma}

\author{Sergey Shadrin}

\address{Korteweg-de Vries Instituut voor Wiskunde, Universiteit van Amsterdam, Postbus 94248,
1090GE Amsterdam, Nederland}

\email{g.carlet@uva.nl, h.b.posthuma@uva.nl, s.shadrin@uva.nl}

\begin{abstract}
Using spectral sequences techniques we compute the bihamiltonian cohomology groups of the pencil of Poisson brackets of dispersionless KdV hierarchy. In particular this proves a conjecture of Liu and Zhang about the vanishing of such cohomology groups. 
\end{abstract}

\maketitle 

\tableofcontents
 
\section{Introduction}

A class of important examples of integrable systems comes from the study of the Poisson structures on a formal loop space, that is, on the space of maps from the circle to an $n$-dimensional ball considered in a formal algebraic way. In this setting one can generalize various notions of differential geometry, in particular one can define the space of functional polyvector fields equipped with the Schouten bracket. 
Introducing a Poisson structure and a system of Hamiltonians in involution one obtains a hierarchy of commuting PDEs. The equations of the flows are given by differential polynomials in involving the derivatives with respect to the spatial variable. 
In the case of the so-called bihamiltonian systems, which are characterized by the presence of two compatible Poisson structures, one can construct a set of Hamiltonians in involution with respect two both Poisson structures, starting from the Casimirs of one of them, by the so called Lenard-Magri recursion scheme. Such framework is especially relevant in the applications of the theory of integrable systems to the Gromov-Witten theory. For further details see~\cite{dz01}. 

In this context the theory of deformations of a compatible pair of Poisson structures is very important. In particular we want to consider the formal deformation of a pair of compatible brackets up to the action of the so-called Miura group, which is the group of local diffeomorphisms of the space of dependent variables. We assume that these diffeomorphisms may depend on the formal deformation parameter. 

An algebraic reformulation of this deformation problem leads to the concept of bihamiltonian cohomology of a pair of Poisson structures. As usual in the deformation theory of algebraic structures, we can classify, in terms of some cohomology groups, the space that parametrizes possible infinitesimal deformations of a pair of brackets and the space that parametrizes the obstruction for a formal perturbative integration of an infinitesimal deformation. 

The computation of
these cohomology groups in general case is fairly difficult, and we refer
the reader to the papers~\cite{al11,b08,dlz06,lz05,lz11,lz13,l02} for various particular
computations as well as surveys and reformulations of these ideas.

In a recent paper~\cite{lz13}, Liu and Zhang computed the bihamiltonian cohomology for the pair of brackets of the dispersionless KdV hierarchy up to degree $3$ and conjectured that the cohomology vanishes in higher degree. 

In this paper we propose a new method for the computation of the bihamiltonian cohomology groups. We introduce a filtration of a related polynomial complex and show that the corresponding spectral sequence collapses at the second page. That allows us to reconstruct the cohomology of such complex and consequently to obtain the bihamiltonian cohomology of the dispersionless KdV Poisson pencil. 
In particular we reproduce the results of Liu and Zhang and prove their conjecture on the vanishing of the cohomology in higher degrees. 

\subsection{Outline of the paper}

In Section 2 we recall the formalism of local functional polyvector fields and the definition of bihamiltonian cohomology groups.  
In Section 3 we recall some textbook material on spectral sequences that we use to approach the problem. 
In Section 4 we introduce the filtration of the polynomial complex, we prove our theorems on its cohomology using the induced spectral sequence and derive the bihamiltonian cohomology of the dispersionless KdV Poisson pencil.

\section{Formalism of functional polyvector fields}

In this Section, we introduce a minimal version of the necessary formalism of local functional polyvector fields and pose the main problem about the bihamiltonian cohomology for the dispersionless KdV hierarchy. For further details in the general case we refer to~\cite{lz13,dz01}.

\subsection{Basic definitions}

Consider a supercommutative associative algebra $\hcA$ defined as  
\begin{equation}
\hcA := C^\infty(\R) [[u^1, u^2, \dots; \theta^0, \theta^1, \theta^2, \dots ]].
\end{equation}
Here $u^i$, $i=1,2,\dots$, are some formal even variables and $\theta^i$, $i=0,1,2,\dots$, are some odd variables.  An element in $C^\infty (\R)$ is represented by a function of the coordinate $u=u^0$ on $\R$. 

This algebra has two different gradations. We define the \emph{standard gradation} on $\hcA$ by assigning the degrees 
\begin{equation}
\deg u^i = \deg \theta^i = i, \quad i=0,1,2,\dots.
\end{equation}
and degree zero to both $\theta=\theta^0$ and the elements in $C^\infty (\R)$. The standard degree $d$ component of $\hcA$ is denoted $\hcA_d$. The {\it super gradation} is defined by assigning degree one to $\theta^i$ for $i\geq0$ and degree zero to the remaining generators of $\hcA$. The super degree $p$ component is denoted $\hcA^p$. We also denote 
\begin{equation}
\hcA_d^p=\hcA_d \cap \hcA^p .
\end{equation}

We define the standard derivation on $\hcA$
\begin{equation}
\partial = \sum_{s\geq0} \left( u^{s+1} \frac{\partial }{\partial u^s} + \theta^{s+1} \frac{\partial }{\partial \theta^s} \right)
\end{equation}
which is compatible with both gradations on $\hcA$, in particular it increases the standard degree by one and leaves invariant the super degree. 

The \emph{local functional polyvector fields} are defined as the elements of the quotient
\begin{equation}
\hcF = \hcA / \partial \hcA ,
\end{equation}
and the corresponding projection is denoted with the integral symbol
\begin{equation}
\int \colon \hcA \to \hcF .
\end{equation}
The gradations on $\hcA$, being compatible with the derivation $\partial$, define corresponding gradations on $\hcF$. The corresponding homogeneous components are denoted with the obvious notations $\hcF_d$, $\hcF^p$ and $\hcF_d^p$.
 
The Schouten bracket $[\cdot,\cdot]$ on $\hcA$ and, therefore, on $\hcF$ (abusing notation we denote it by the same symbol) is defined in the standard way using the second order operator 
$
\delta_u \delta_\theta,
$
where 
\begin{equation}
\delta_u :=\sum_{s=0}^\infty (-\partial)^s \frac{\partial}{\partial u^s}, 
\qquad
\delta_\theta :=\sum_{s=0}^\infty (-\partial)^s \frac{\partial}{\partial \theta^s}. 
\end{equation}

\subsection{Poisson structures}

A Poisson structure is given by a local functional bivector field $P\in \hcF^2$ such that $[P,P]=0$. The operator $d_P:=[P,\cdot]$ on $\hcF$ is induced from the operator $D_P$ defined on $\hcA$ given by the 
following formula
\begin{equation}
D_P:= \sum_{s=0}^\infty  \left(
\partial^s (\delta_\theta P) \frac{\partial}{\partial u^s}
+
\partial^s (\delta_u P) \frac{\partial}{\partial \theta^s}
\right) .
\end{equation}
 The deformation theory of a single Poisson bracket is controlled by the cohomology $H(\hcF,d_P)$. An important related space is $H(\hcA,D_P)$. 
 
Given $P \in \hcF^2_1$ a scalar Hamiltonian structure of hydrodynamic type, we have \cite{g02, dms05, lz11, lz13}
\begin{equation}
H(\hcF, d_P) = H(\hcA, D_P) = \R \oplus \R \theta .
\end{equation}
%
The cohomology $H(\hcA,D_P)$ was computed in~\cite{lz11}. 
In~\cite{lz13} the cohomology $H_{>0}(\hcF,d_P)$ is derived from $H(\hcA, D_P)$ by a long exact sequence argument. The cohomology $H_0(\hcF,d_P)$ can be easily computed in the scalar case by hand. 

A pencil of two commuting Poisson structures is given by two local functional bivectors $P_1,P_2\in \hcF^2$ such that $[P_1,P_1]=[P_2,P_2]=[P_1,P_2]=0$. In this case the deformation theory of the pair is controlled by the graded vector space 
\begin{equation}
BH(\hcF,d_{P_1},d_{P_2}):=\frac{\ker d_{P_1} \cap \ker d_{P_2}  }{  \im \left(d_{P_1} d_{P_2}\right) }
\end{equation}
 that is called the \emph{bihamiltonian cohomology}. For almost all values of the bi-grading $(p,d)$ the computation of the bihamiltonian cohomology can be reduced to the computation of 
\begin{equation}
BH(\hcA,D_{P_1},D_{P_2}):=\frac{ \ker D_{P_1} \cap \ker D_{P_2}  }{ \im \left(D_{P_1} D_{P_2}\right)}
\end{equation}
 via a long exact sequence~\cite{lz13}.

\subsection{Poisson structures of the dispersionless KdV} 

The two Poisson structures of the dispersionless KdV are given by the following two bi-vector fields:
\begin{align}
P_1 & := \frac{1}{2} \int \theta\theta^1,
&
P_2 & := \frac 12  \int u \theta \theta^1.
\end{align} 

The Schouten brackets with $P_1$ and $P_2$ induce the differentials $D_1$ and $D_2$ on the space $\hcA$ and the differentials $d_1$ and $d_2$ on the space $\hcF$. A direct computation shows that 
\begin{align}
D_1 & = \sum_{s\geq0} \theta^{s+1} \frac{\partial }{\partial u^s},
\notag \\
D_2 & = \sum_{s\geq0} \left(  \partial^s ( u \theta^1 + \frac12 u^1 \theta ) \frac{\partial }{\partial u^s} + \partial^s (\frac12 \theta \theta^1 ) \frac{\partial }{\partial \theta^s} \right). \notag
\end{align}

The main goal of this paper is to compute the graded vector space $BH(\hcF,d_{1},d_{2})$. A closely related space is $BH(\hcA,D_{1},D_{2})$ and we explain below the precise relation following the results of Liu and Zhang~\cite{lz13}.

\subsection{Grading and subcomplexes}

Since $\theta^i$, $i=0,1,2,\dots$, are odd variables, we have a restriction on possible gradings $p$ and $d$ on the spaces $\hcA$ and $\hcF$. Indeed, the minimal possible  standard $d$-degree of a monomial in $\theta^i$ of super degree $p$ is the degree of $\theta^0\cdots\theta^{p-1}$ equal to $0+\cdots+(p-1)=p(p-1)/2$. So, $\hcA^p_d = \hcF^p_d=0$ for $d<p(p-1)/2$. 

Another observation is related to the fact that the $(p,d)$-degree of the operators $D_1$ and $D_2$ (and, therefore, $d_1$ and $d_2$) is $(1,1)$. Therefore, the difference $p-d$ is preserved by both operators, and this means that the whole space $\hcA$ (and $\hcF$) can be split into an a completion of an infinite direct sum of subcomplexes with the fixed difference $p-d$. The inequality $d<p(p-1)/2$ implies that each of these subcomplexes is finite. Let us give here first few examples (we write it for $\hcF$; for $\hcA$ it is exactly the same): 
\begin{align}
& 0 \to \hcF^1_0 \to \hcF^2_1 \to 0 \\ \notag
& 0 \to \hcF^0_0 \to \hcF^1_1 \to \hcF^2_2 \to \hcF^3_3 \to 0 \\ \notag
& 0 \to \hcF^0_1 \to \hcF^1_2 \to \hcF^2_3 \to \hcF^3_4 \to 0 \\ \notag
& 0 \to \hcF^0_2 \to \hcF^1_3 \to \hcF^2_4 \to \hcF^3_5 \to \hcF^4_6 \to 0 \\ \notag
& \vdots
\end{align}
So, we can compute the bihamiltonian cohomology of each subcomplex separately. Subcomplexes are fixed by the difference $d-p=-1,0,1,2\dots$.







\section{Spectral sequences}

In this Section we recall some basic machinery from the theory of spectral sequences. Although this material is well-known (see e.~g.~\cite{m01}), we have decided to include it for the benefit of the reader, and because the precise construction of the pages of the spectral sequence allows in the end to reconstruct the cocycles representing nontrivial cohomology classes. In this way, our arguments in the next section computing the bihamiltonian cohomology using the spectral sequence of a filtration can be related to the explicit computations of [9].

A (cohomological type) {\it spectral sequence} is a family of differential $\Z$-bigraded vector spaces $(E_r^{*,*},d_r)$ with differentials $d_r$ of bidegree $(r,1-r)$ such that for all $p,q\in\Z$ and all $r\geq 0$, the page $E_{r+1}^{pq}$ is isomorphic to the cohomology of the previous page
\begin{equation}
H^{pq}(E_r^{*,*}, d_r) := \frac{\ker ( d_r: E_r^{pq} \to E_r^{p+r,q-r+1})}{\im (d_r:E_r^{p-r, q+r-1} \to E_r^{pq})} .
\end{equation}

A spectral sequence is said to {\it collapse} at the $N$-th term if the differentials $d_r$ vanish for $r\geq N$. In such case $E_r^{*,*} \cong E_N^{*,*}$ for $r\geq N$. Then $E_\infty^{*,*} := E_N^{*,*}$ is called the {\it limit term} of the spectral sequence.    

Let $(C,d)$ be a $\Z$-graded differential complex (of $\R$-vector spaces) with a differential $d:C\to C$, $d^2=0$, of degree $1$. 
Let $F^*C$ be a decreasing filtration of $(C,d)$, i.e. a family of vector subspaces $F^i C$ of $C$, $i\in\Z$
\begin{equation}
\cdots \subset F^{i+1} \subset F^i C \subset \cdots \subset C 
\end{equation}
which are compatible with the differential, $d( F^i C ) \subset F^i C$. 
Let $F^i C^p := F^i C \cap C^p$ be the induced filtration of the homogeneous component $C^p$ of degree $p$. 
Let us denote by $gr^* C$ and $gr^* C^p$ the graded spaces associated to the filtrations $F^* C$ and $F^* C^p$, respectively, i.e. 
\begin{equation}
gr^i C = \frac{F^i C}{F^{i+1} C}  , \quad
gr^i C^p = \frac{F^i C^p}{F^{i+1} C^p} .
\end{equation}

\begin{proposition}
With a $\Z$-graded differential complex $(C,d)$ with a filtration $F^* C$ one associates a spectral sequence $(E^{*,*}_r,d_r)$. The first two pages are given by
\begin{equation} \label{E0}
E_0^{p,q} = gr^p C^{p+q} 
\end{equation}
and
\begin{equation} \label{E1}
E_1^{p,q} =  \frac{d^{-1} (F^{p+1} C^{p+q+1}) \cap F^p C^{p+q}}{d(F^p C^{p+q-1} ) + F^{p+1} C^{p+q} } ,
\end{equation}
with differentials $d_0$, $d_1$ induced by $d$ on the quotients.
\end{proposition}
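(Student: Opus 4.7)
The plan is to construct the pages $E_r^{*,*}$ explicitly from the filtration, prove the recursion $E_{r+1}\cong H(E_r,d_r)$, and then read off $E_0$ and $E_1$ by specialization.

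For each $r \geq 0$ and $p,q \in \Z$, introduce the spaces of approximate cocycles and coboundaries
\[
Z_r^{p,q} := \{ c \in F^p C^{p+q} : dc \in F^{p+r} C^{p+q+1}\},
\quad
B_r^{p,q} := F^p C^{p+q} \cap d(F^{p-r} C^{p+q-1}).
\]
Compatibility of $d$ with the filtration gives the inclusions $B_{r-1}^{p,q} \subset B_r^{p,q} \subset Z_s^{p,q} \subset Z_r^{p,q} \subset Z_0^{p,q} = F^p C^{p+q}$ for all $s \geq r$, and $Z_{r-1}^{p+1,q-1} \subset F^{p+1}C^{p+q}$. Define the $r$-th page
\[
E_r^{p,q} := \frac{Z_r^{p,q}}{Z_{r-1}^{p+1,q-1} + B_{r-1}^{p,q}} .
\]
Then $d$ maps $Z_r^{p,q}$ into $Z_r^{p+r,q-r+1}$, sends $Z_{r-1}^{p+1,q-1}$ into $B_{r-1}^{p+r,q-r+1}$, and annihilates $B_{r-1}^{p,q}$, so it descends to a differential $d_r$ of bidegree $(r,1-r)$ on $E_r^{*,*}$.

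The central step is the recursion $H^{p,q}(E_r^{*,*},d_r)\cong E_{r+1}^{p,q}$. Unpacking the definitions, I would identify
\[
\ker d_r^{p,q} = \frac{Z_{r+1}^{p,q} + Z_{r-1}^{p+1,q-1}}{Z_{r-1}^{p+1,q-1} + B_{r-1}^{p,q}},
\qquad
\im d_r^{p-r,q+r-1} = \frac{B_r^{p,q} + Z_{r-1}^{p+1,q-1}}{Z_{r-1}^{p+1,q-1} + B_{r-1}^{p,q}} .
\]
The quotient then simplifies, via the second isomorphism theorem together with the modular-law identity $Z_{r+1}^{p,q} \cap (B_r^{p,q} + Z_{r-1}^{p+1,q-1}) = B_r^{p,q} + Z_r^{p+1,q-1}$ (which rests on $B_r^{p,q} \subset Z_{r+1}^{p,q}$ and $Z_{r-1}^{p+1,q-1} \cap Z_{r+1}^{p,q} = Z_r^{p+1,q-1}$), to the claimed expression $Z_{r+1}^{p,q}/(Z_r^{p+1,q-1} + B_r^{p,q})$ for $E_{r+1}^{p,q}$. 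This recursion is the main technical obstacle; the rest is bookkeeping.

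Finally, specialize. For $r=0$, one has $Z_0^{p,q} = F^p C^{p+q}$, while $Z_{-1}^{p+1,q-1} = F^{p+1} C^{p+q}$ and $B_{-1}^{p,q} \subset F^{p+1} C^{p+q}$, so $E_0^{p,q} = F^p C^{p+q}/F^{p+1} C^{p+q} = gr^p C^{p+q}$, matching \eqref{E0}. For $r=1$, $Z_1^{p,q} = d^{-1}(F^{p+1} C^{p+q+1}) \cap F^p C^{p+q}$, $Z_0^{p+1,q-1} = F^{p+1} C^{p+q}$, and $B_0^{p,q} = d(F^p C^{p+q-1})$, yielding exactly \eqref{E1}. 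In both cases, $d_r$ is by construction the differential induced by $d$ on the quotient.
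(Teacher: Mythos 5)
Your construction is exactly the one the paper uses: the same approximate cocycle and coboundary spaces $Z_r^{p,q}$ and $B_r^{p,q}$, the same definition of the pages $E_r^{p,q}=Z_r^{p,q}/(Z_{r-1}^{p+1,q-1}+B_{r-1}^{p,q})$, and the same specializations at $r=0,1$; your identifications of $\ker d_r$ and $\im d_r$ and the modular-law reduction to $E_{r+1}^{p,q}$ are correct and merely fill in the recursion step that the paper states without proof. No issues.
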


We quickly review the construction of the spectral sequence associated with a filtered graded complex. 
Recall that each homogeneous component $C^p$ has a decreasing filtration $F^i C^p$ and that the stability of the differential implies $d(F^i C^p) \subset F^i C^{p+1}$.
One starts by defining the subspaces of $F^p C^{p+q}$
\begin{align}
&Z_r^{p,q} = d^{-1}( F^{p+r} C^{p+q+1} )\cap F^p C^{p+q}, \\
&B_r^{p,q} = d( F^{p-r} C^{p+q-1} )\cap F^p C^{p+q} 
\end{align}
which form a tower 
\begin{equation}
d( F^{p} C^{p+q-1} )=B_0^{p,q} \subset B_1^{p,q}  \subset \cdots \subset Z_1^{p,q} \subset Z_0^{p,q} = F^p C^{p+q} .
\end{equation}
Notice moreover that $B_{-1}^{p,q} = d( F^{p+1} C^{p+q-1} )$ and $Z_{-1}^{p,q} = F^p C^{p+q} $. 

We define the $r$-th page of the spectral sequence as
\begin{equation} \label{defE}
E_r^{p,q} = \frac{Z_r^{p,q}}{B_{r-1}^{p,q} + Z_{r-1}^{p+1,q-1}} ,
\end{equation}
which is a well-defined quotient since $Z_{r-1}^{p+1, q-1} \subset Z_r^{p,q}$. For $r=0,1$ this gives~\eqref{E0} and~\eqref{E1}.

One can check that
\begin{equation}
d (Z_r^{p,q}) = B_r^{p+r,q-r+1} \subset Z_r^{p+r,q-r+1} ,
\end{equation}
and, consequently, 
\begin{equation}
d( B_{r-1}^{p,q} + Z_{r-1}^{p+1,q-1} ) = B_{r-1}^{p+r,q-r+1}. 
\end{equation}
That means that the differential restricts to a map $Z_r^{p,q} \to Z_r^{p+r,q-r+1}$ which descends to the quotient in~\eqref{defE}, defining the differential $d_r$. Finally one can prove that the cohomology of the differential $d_r$ on $E_r^{p,q}$ coincides with $E_{r+1}^{p,q}$.

\begin{remark}
One can check that $H^{p,q}(E_0^{*,*}, d_0) = E_1^{p,q}$ simply by computing kernel and image of $d_0$ in $E_0^{p,q}$ which shows that the cohomology is
\begin{equation}
\frac{Z_1^{p,q} / F^{p+1} C^{p+q} }{B_0^{p,q} / F^{p+1} C^{p+q}} \cong 
\frac{Z_1^{p,q }}{B_0^{p,q} + F^{p+1} C^{p+q}} = E_1^{p,q}.
\end{equation}
\end{remark}

A filtration $F^* C$ of a graded complex $(C,d)$ is {\it bounded} if for each degree $p$ there are integers $s$ and $t$ such that 
\begin{equation}
0 = F^s C^p \subset \cdots \subset F^{i+1} C^p \subset F^i C^p \subset \cdots \subset F^t C^p = C^p .
\end{equation}

The cohomology  of a filtered graded complex $(C,d)$ inherits a filtration, where $F^i H(C,d)$ is given by the image of $H( F^i C, d)$ in $H(C,d)$ under the inclusion map.

The standard {\it convergence theorem} for the spectral sequence associated with a filtered graded complex states that, under the assumption of boundedness
of the filtration, the limit term of the spectral sequence determines the graded space associated with the filtration of the cohomology of the complex.

More precisely we have, see e.g. \cite{m01}:
\begin{theorem}  \label{spectralthm}
The spectral sequence associated with a bounded filtration converges to $H(C,d)$, i.e.
\begin{equation}
E_\infty^{p,q} \cong \frac{F^p H^{p+q} (C,d)}{F^{p+1} H^{p+q} ( C,d)} .
\end{equation}
\end{theorem}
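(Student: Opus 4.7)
The plan is to prove convergence in two steps: first, use boundedness to show that the spectral sequence stabilizes at some finite page and compute the limit terms $E_\infty^{p,q}$ as explicit quotients of kernels and images of $d$ inside $F^p C^{p+q}$; second, identify these quotients with the associated graded pieces of the induced filtration on $H^{p+q}(C,d)$.

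For the first step, I would fix $(p,q)$ and use boundedness of the filtrations on $C^{p+q-1}$, $C^{p+q}$, $C^{p+q+1}$: there exist integers so that $F^{p+r}C^{p+q+1}=0$ and $F^{p-r}C^{p+q-1}=C^{p+q-1}$ for all $r$ sufficiently large. For such $r$ the defining conditions of the towers simplify, giving
\begin{equation}
Z_r^{p,q} = \ker d \cap F^p C^{p+q} =: Z_\infty^{p,q}, \qquad B_r^{p,q} = d(C^{p+q-1}) \cap F^p C^{p+q} =: B_\infty^{p,q}.
\end{equation}
Consequently $E_r^{p,q}$ stabilizes for $r$ large to
\begin{equation}
E_\infty^{p,q} = \frac{Z_\infty^{p,q}}{B_\infty^{p,q}+Z_\infty^{p+1,q-1}}.
\end{equation}
I would also observe that boundedness implies that for each $(p,q)$ the higher differentials $d_r$ vanish for $r$ large, so the notation $E_\infty^{p,q}$ is consistent with the one introduced in the section on collapsing.

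For the second step, I would use the definition $F^p H^{p+q}(C,d) = \operatorname{image}\bigl(H^{p+q}(F^pC,d)\to H^{p+q}(C,d)\bigr)$, which equals the set of cohomology classes admitting a representative in $F^p C^{p+q}$, i.e., in $Z_\infty^{p,q}$. Thus the natural map $Z_\infty^{p,q}\to H^{p+q}(C,d)$ has image $F^p H^{p+q}$, and composing with the projection yields a surjection $Z_\infty^{p,q}\twoheadrightarrow F^p H^{p+q}/F^{p+1}H^{p+q}$. I would then identify its kernel as $B_\infty^{p,q}+Z_\infty^{p+1,q-1}$: an element $z\in Z_\infty^{p,q}$ maps to zero precisely when $[z]\in F^{p+1}H^{p+q}$, meaning $z=z'+dy$ with $z'\in Z_\infty^{p+1,q-1}$ and $y\in C^{p+q-1}$, hence $dy\in B_\infty^{p,q}$. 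Conversely, every element of $B_\infty^{p,q}+Z_\infty^{p+1,q-1}$ is clearly in the kernel. This gives the desired isomorphism $E_\infty^{p,q}\cong F^pH^{p+q}/F^{p+1}H^{p+q}$.

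The main obstacle is careful bookkeeping rather than a single deep insight: one must verify that the two simplifications in the first step (the one for $Z_r$ using the lower bound of the filtration on $C^{p+q+1}$, and the one for $B_r$ using the upper bound on $C^{p+q-1}$) happen simultaneously for $r$ large enough, and one must justify the characterization of $F^{p+1}H^{p+q}$ as classes with representatives in $F^{p+1}C^{p+q}$, which depends on the fact that a coboundary in $F^{p+1}C^{p+q}$ need not bound something in $F^{p+1}C^{p+q-1}$, yet still produces the same cohomology class. The rest is a direct unfolding of the definitions of $Z_\infty^{p,q}$, $B_\infty^{p,q}$, and the filtration on cohomology.
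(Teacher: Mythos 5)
Your argument is correct and is precisely the standard textbook proof of this classical convergence theorem: the paper itself does not prove the statement but defers to McCleary, and your two steps (stabilization of $Z_r^{p,q}$ and $B_r^{p,q}$ for large $r$ via boundedness, followed by the identification of $Z_\infty^{p,q}/(B_\infty^{p,q}+Z_\infty^{p+1,q-1})$ with $F^pH^{p+q}/F^{p+1}H^{p+q}$ through the surjection $Z_\infty^{p,q}\twoheadrightarrow F^pH^{p+q}/F^{p+1}H^{p+q}$ and the computation of its kernel) match that reference's argument, using exactly the towers $Z_r^{p,q}$, $B_r^{p,q}$ set up in the paper's Section~3. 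The one delicate point, the characterization of $F^{p+1}H^{p+q}$ as classes admitting a cocycle representative in $F^{p+1}C^{p+q}$ (with the bounding cochain allowed to lie anywhere in $C^{p+q-1}$), is correctly identified and handled in your write-up.
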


\section{Bihamiltonian cohomology}

In this Section we prove our main result, i.e. we compute the bihamiltonian cohomology $BH^p_d(\hcF,d_1,d_2)$ of the dispersionless KdV Poisson pencil.

The main step in our proof is the computation of the cohomology of the auxiliary complex $(\hcA[\lambda],D_\lambda)$ in Proposition~\ref{mainproposition}. We define a filtration of such complex by imposing a bound on the highest derivative and we study the convergence of the associated spectral sequence. The first page $E^{*,*}_1$ is still highly non-trivial: we compute it in Lemma~\ref{lemma7} and we give an explicit expression for the differential $d_1$ in Lemma~\ref{lemma9}. The spectral sequence however collapses at the second term. This is proved by introducing an explicit contracting homotopy $h$ for $d_1$. Finally, the convergence theorem for bounded spectral sequences allows us to recover the cohomology of 
$(\hcA[\lambda],D_\lambda)$ by a standard argument. 

In Section~\ref{conseq} we use two arguments from~\cite{lz13} to compute the cohomology groups $H(\hcF[\lambda], d_\lambda)$ and the bihamiltonian cohomology groups $BH(\hcA, D_1,D_2)$ and $BH(\hcF, d_1,d_2)$. The first relates the bihamiltonian cohomology groups $BH(\hcA, D_1, D_2)$, resp. $BH(\hcF, d_1, d_2)$, to the cohomology of the complexes $(\hcA[\lambda], D_\lambda)$ , resp. $(\hcF[\lambda],d_\lambda)$, cf. Lemma~\ref{lamcohomlemma}.
The second employs a long exact sequence~\eqref{long} to relate the cohomology of $(\hcF[\lambda],d_\lambda)$ to that of $(\hcA[\lambda],D_\lambda)$.

%
%

\subsection{The main proposition}

We start by computing the cohomology of the complex $(\hcA[\lambda],D_\lambda)$, where $D_\lambda = D_2 - \lambda D_1$.
\begin{proposition} \label{mainproposition}
\begin{equation} \label{lamcohom}
H(\hcA[\lambda],D_\lambda) = \R[\lambda]\oplus (C^\infty (\R)/ \R[u]  )\theta\theta^1 \oplus C^\infty (\R) \theta\theta^1\theta^2
\end{equation}

\end{proposition}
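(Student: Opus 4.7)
The plan follows the outline of Section~4: introduce a bounded filtration of $(\hcA[\lambda], D_\lambda)$, analyze the associated spectral sequence, prove collapse at $E_2$ by constructing an explicit contracting homotopy for $d_1$, and reconstruct the cohomology using Theorem~\ref{spectralthm}. The filtration I have in mind measures the order of the highest derivative appearing in a monomial, for instance $F^p\hcA[\lambda]$ equal to the span of monomials containing at least $p$ generators of type $u^j$ or $\theta^j$ with $j\geq 1$ (or an equivalent choice that makes the leading parts of $D_1$ and $D_2$ behave well). Both $D_1=\sum_s\theta^{s+1}\partial/\partial u^s$ and $D_2$ preserve this filtration because the top-index parts of $\partial^s(u\theta^1+\tfrac12 u^1\theta)$ and $\partial^s(\tfrac12\theta\theta^1)$ introduce exactly one high-index variable. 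Boundedness on each bidegree $(p,d)$ component follows from $\hcA^p_d=0$ for $d<p(p-1)/2$, so Theorem~\ref{spectralthm} will apply.

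On $E_0$ the induced differential $d_0$ retains only the leading part of $D_\lambda = D_2-\lambda D_1$. A linear change of variables in the top-order generators, folding $-\lambda$ into the multiplicative action of $u$, puts $d_0$ into a Koszul-type shape, and its cohomology is a finite computation producing $E_1$; this is the content of Lemma~\ref{lemma7}. The subleading terms of $D_\lambda$ then descend to a differential $d_1$ on $E_1$ whose explicit form will be the subject of Lemma~\ref{lemma9}.

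The heart of the argument, and what I expect to be the main obstacle, is an explicit contracting homotopy $h$ on $E_1$ satisfying $h d_1 + d_1 h = \mathrm{id}$ outside a small subspace whose representatives are exactly the three summands on the right-hand side of~\eqref{lamcohom}. I would construct $h$ by ``integrating against $\partial$'' in a direction opposite to $d_1$, with case analysis on the super degree $p$. Three obstructions remain: in super degree $0$ only constants in the $u^j$'s survive, yielding $\R[\lambda]$; in super degree $2$ a representative $f(u)\theta\theta^1$ is a $D_1$-coboundary precisely when $f\in\R[u]$ (since $D_1\bigl(-\tfrac{1}{n+1}u^{n+1}\theta\bigr)=u^n\theta\theta^1$), leaving $(C^\infty(\R)/\R[u])\theta\theta^1$; and in super degree $3$ no further integration is possible, leaving $C^\infty(\R)\theta\theta^1\theta^2$. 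Verifying that $h$ cancels everything outside this explicit list is the computational core.

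Once collapse $E_2 = E_\infty$ is established, Theorem~\ref{spectralthm} identifies the associated graded of the filtered cohomology with the three summands above. There is no extension problem because the representatives $1$, $f(u)\theta\theta^1$, and $g(u)\theta\theta^1\theta^2$ sit in distinct filtration degrees and, by a direct check using the explicit formulas for $D_1$ and $D_2$, are genuine $D_\lambda$-cocycles in $\hcA[\lambda]$ (each of them is annihilated separately by $D_1$ and by $D_2$). This yields the claimed identification~\eqref{lamcohom}.
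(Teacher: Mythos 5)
Your overall strategy coincides with the paper's: filter $\hcA[\lambda]$ by the order of the highest derivative, compute $E_1$ and the induced $d_1$, kill everything on $E_1$ outside three exceptional spots by a contracting homotopy, and invoke the convergence theorem (there is indeed no extension problem, since the three survivors sit in distinct total degrees). But the proposal leaves the actual heart of the argument unproved: you explicitly defer the construction and verification of the homotopy $h$, describing it only as ``integrating against $\partial$ in a direction opposite to $d_1$.'' That is not where the homotopy comes from. The differential $d_1$ on $E_1^{p,q}=\hcA_p^{[q-1]}\theta\theta^q$ (for $p\geq1$, $q\geq2$) has the form $d_1=U\theta^1+\alpha$, where $U=\sum_{s\geq1}\tfrac{s+2}{2}u^s\partial_{u^s}+\sum_{s\geq0}\tfrac{s-1}{2}\theta^s\partial_{\theta^s}$ is a weighted Euler operator, invertible except exactly at $(p,q)=(1,2)$, and $\alpha$ is the $\theta^1$-free part, which anticommutes with $\partial/\partial\theta^1$ and commutes with $U$. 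The homotopy is $h=U^{-1}\partial/\partial\theta^1$, and the identity $hd_1+d_1h=1$ hinges on these specific commutation properties. Without identifying this structure (in particular, why the homotopy fails precisely at $(1,2)$, which is where the class $C^\infty(\R)\theta\theta^1\theta^2$ survives, and why it is unnecessary at $(0,0)$ and $(0,1)$), the proposal is a plan rather than a proof. Relatedly, the concrete filtration you offer (``at least $p$ generators with index $\geq1$'') is not the one matching your stated intent; the filtration that works is $F^i\hcA_d[\lambda]=\hcA_d^{(d-i)}[\lambda]$, i.e.\ the highest derivative order measured against the standard degree.

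There is also a genuinely wrong step in your identification of the second summand. You claim that $f(u)\theta\theta^1$ is a $D_1$-coboundary precisely when $f\in\R[u]$; in fact $D_1(-F(u)\theta)=F'(u)\,\theta\theta^1$, so \emph{every} smooth $f$ is a $D_1$-coboundary, and your criterion would wrongly kill the whole class. The quotient by $\R[u]$ arises instead from $D_\lambda$-exactness in $\hcA[\lambda]$: solving $D_\lambda\bigl(b(u,\lambda)\theta\bigr)=\bigl(-(u-\lambda)b_u+\tfrac12 b\bigr)\theta\theta^1=f(u)\theta\theta^1$ with $b$ \emph{polynomial} in $\lambda$ forces, by descending induction on the powers of $\lambda$, all coefficients $b_k$ and hence $f$ to be polynomial in $u$. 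This interaction between the $\lambda$-polynomiality and the pencil $D_2-\lambda D_1$ is exactly what produces $C^\infty(\R)/\R[u]$, and it needs to be argued as such (in the paper it appears already in the computation of $E_1^{0,1}$).
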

\begin{proof}
The proof consists of two parts: first we define a filtration of the complex $(\hcA[\lambda] , D_\lambda)$ and compute the first page $E_1^{*,*}$ of the associated spectral sequence. Then we compute the second page $E_2^{*,*}$ by defining an explicit contracting homotopy $h$ for $d_1$. It turns out that the spectral sequence $E_r^{*,*}$ collapses at the second page. Use of the standard convergence theorem for spectral sequences yields the desired result. 

Let 
\begin{equation}
\hcA^{(r)} : = C^\infty (\R) [[u^1, \dots, u^r; \theta, \dots , \theta^r ]],
\end{equation}
the subspace of $\hcA$ where the order of $x$-derivatives is at most $r$. 
The quotient $\hcA^{(r)} / \hcA^{(r-1)}$ is canonically identified with the subspace $\hcA^{[r]}$ generated by monomials in $\hcA^{(r)}$ with at least one $x$-derivative of order $r$. Denote 
\begin{equation}
\hcA_d^{(r)} = \hcA_d \cap \hcA^{(r)}, \qquad
\hcA_d^{[r]}  = \hcA_d \cap \hcA^{[r]} \cong \hcA_d^{(r)} / \hcA_d^{(r-1)}
\end{equation}
the standard degree $d$ homogeneous component of $\hcA^{(r)}$ and $\hcA^{[r]}$, respectively.

We define the following decreasing filtration of $\hcA[\lambda]$
\begin{equation}
F^i \hcA[\lambda] = \left\{ a = \sum_{d\geq 0} a_d \in \hcA[\lambda] \text{ s.t. } a_d \in \hcA_d^{(d-i)}[\lambda] \right\}, 
\end{equation} 
for $i\in\Z$, which induces the filtration of the homogeneous component $\hcA_d[\lambda]$
\begin{equation}
F^i \hcA_d[\lambda] = \hcA_d^{(d-i)}[\lambda] .
\end{equation}
This filtration is bounded i.e.
\begin{equation}
0 = F^{d+1} \hcA_d[\lambda] \subset \cdots \subset F^{i+1} \hcA_d [\lambda] \subset F^i \hcA_d[\lambda] \subset \cdots \subset F^0 \hcA_d [\lambda] = \hcA_d[\lambda] .
\end{equation}

Let $(E_r^{*,*},d_r)$ be the spectral sequence associated with the filtration $F^i \hcA[\lambda]$. By construction the zeroth page $E_0^{*,*}$ is given by the graded space associated with the filtration
\begin{equation}
E_0^{pq} = gr^p \hcA_{p+q}[\lambda] \cong \hcA_{p+q}^{[q]}[\lambda] .
\end{equation}
Note that $E_r^{*,*}$ is a first quadrant spectral sequence, i.e. $E_r^{p,q}$ is non-trivial only if $p,q\geq0$. 

In particular we have
\begin{equation}
E_0^{0,0} =\hcA_0[\lambda], \quad
E_0^{p,0} = 0 \text{ for } p>0
\end{equation}
and 
\begin{equation}
E_0^{0,q} = u^q \hcA_0[\lambda] + \theta^q \hcA_0[\lambda] 
\text{ for } q >0.
\end{equation}

The differential $d_0:E_0^{p,q}\to E_0^{p,q+1} $ is simply the differential induced by $D_\lambda$ on the graded space, i.e.
\begin{equation}
d_0 : \hcA^{[q]}_{p+q}[\lambda] \to \hcA^{[q+1]}_{p+q+1}[\lambda]
\end{equation}
explicitly given by
\begin{equation}
d_0 = \left( (u-\lambda) \theta^{q+1} +\frac12 u^{q+1} \theta \right) \frac{\partial }{\partial u^q} + \frac12 \theta \theta^{q+1} \frac{\partial }{\partial \theta^q} .  
\end{equation}
 
By computing the cohomology of the complex $(E_0^{*,*},d_0)$ we get the first page of the spectral sequence.
\begin{lemma} \label{lemma7}
The non-trivial entries $E_1^{p,q}$ of the first page of the spectral sequence are given by
\begin{align}
& E_1^{0,0} = \R [\lambda], \\
& E_1^{0,1} = \frac{C^\infty(\R)}{\R[u]} \theta \theta^1, \\
& E_1^{p,q} = \hcA_p^{[q-1]} \theta \theta^q 
\text{ for } p\geq1, q\geq2 . \label{bigcase}
\end{align}

\end{lemma}
\begin{proof}
Let us prove~\eqref{bigcase}. Let $p\geq1$, $q\geq1$. The kernel of $d_0$ in $E_0^{p,q}$ is given by 
\begin{equation} \label{kern}
\theta\theta^q \hcA^{(q-1)}_{p}[\lambda] + \left( (u-\lambda) \theta^q + \frac12 u^q \theta \right) \hcA^{(q-1)}_{p} [\lambda].
\end{equation}
To see this, let $f \in \hcA^{[q]}_{p+q} [\lambda]$ and write it as $f = f_0 + \theta f_1$ with both $f_0$ and $f_1$ independent of $\theta$. The equation $d_0 f = 0$ is equivalent to
\begin{align}
& \theta \frac{\partial f}{\partial u^q} = 0 , \\
& (u-\lambda) \frac{\partial f}{\partial u^q} -\frac12 \theta \frac{\partial f}{\partial \theta^q} =0 .   
\end{align}
The first one implies that $f_0$ does not depend on $u^q$ hence it can be written as
\begin{equation}
f_0 = \theta^q g
\end{equation}
for $g \in \hcA^{(q-1)}_{p} [\lambda]$ independent of $\theta$.

The second equation then becomes
\begin{equation}
(u-\lambda) \frac{\partial f_1}{\partial u^q} = \frac12 g .
\end{equation}
This implies that, since $g$ does not depend on $u^q$ and $\theta^q$, then 
\begin{equation}
f_1 = \theta^q f_2 + u^q f_3
\end{equation}
for $f_2, f_3\in \hcA^{(q-1)}_{p}[\lambda]$. Then $f$ is clearly of the form~\eqref{kern}.

Note that from~\eqref{kern} it follows immediately that $E_1^{p,1}=0$ for $p\geq 1$.

Let us now restrict to $q\geq2$. The image of $d_0$ in $E_0^{p,q}$ is given by elements of the form
\begin{equation} \label{imel}
\left( (u-\lambda) \theta^q + \frac12 u^q \theta \right) \frac{\partial \tilde{g}}{\partial u^{q-1}} 
+ \theta^q \theta \left( (u-\lambda) \frac{\partial \tilde{f}}{\partial u^{q-1}} - \frac12 \frac{\partial \tilde{g}}{\partial \theta^{q-1}}  \right) 
\end{equation}
where $\tilde{g} + \theta \tilde{f}$ is an arbitrary element in $\hcA^{[q-1]}_{p+q-1}[\lambda]$ and $\tilde{f}$ and $\tilde{g}$ do not depend on $\theta$. 

Let $f$ be an element of the kernel~\eqref{kern}, i.e. of the form
\begin{equation}
f= \left( (u-\lambda) \theta^q + \frac12 u^q \theta \right) g + \frac12 \theta \theta^q h .
\end{equation} 
Note that we can choose both $g$ and $h$ independent of $\theta$.

Let us now find another representative for the class $[f]$ in $E_1^{p,q}$ by subtracting an element of the image of $d_0$ of the form~\eqref{imel}.
Choosing $\tilde{g}$ such that
\begin{equation} \label{47}
g = \frac{\partial \tilde{g}}{\partial u^{q-1}}
\end{equation}
we obtain a representative of the form
\begin{equation} \label{48}
\frac12 \theta\theta^q \left( h
- \frac{\partial \tilde{g}}{\partial \theta^{q-1} }  
+2 (u-\lambda) \frac{\partial \tilde{f}}{\partial u^{q-1}}  \right) .
\end{equation}

Since $\tilde{g}$ is a polynomial in $u^{q-1}$, eq.~\eqref{47} fixes completely $\tilde{g}$ but for its part of degree zero in $u^{q-1}$. Let $\tilde{g} = \tilde{g}_0 + \tilde{g}_1$ where $\tilde{g}_0$ is the degree-zero part in $u^{q-1}$. 
Since $\frac{\partial \tilde{g}_0}{\partial \theta^{q-1}}$ is an arbitrary term in $\hcA^{(q-2)}_{p}[\lambda]$ which does not depend on $\theta$, we can choose $\tilde{g}_0$ such that
\begin{equation}
h + \frac{\partial \tilde{g}_1}{\partial \theta^{q-1}} + \frac{\partial \tilde{g}_0}{\partial \theta^{q-1}}   
\end{equation} 
is an element of $\hcA^{[q-1]}_{p}[\lambda]$.
Finally we can choose $\tilde{f}$ to kill the $\lambda$ dependence in~\eqref{48}. Eq.~\eqref{bigcase} is proved.

It remains to show that $E_1^{0,q} =0$ for $q\geq2$, $E_1^{0,1} = C^\infty (\R) \slash \R[u]$ and $E_1^{0,0}=\R[\lambda]$, which are straightforward computations.
\end{proof}

\begin{remark}
Note that $E_1^{p,q} =0$ for $p \leq q-2$, $q \geq2$.
\end{remark}

\begin{lemma} \label{lemma9}
For $p\geq1$, $q\geq2$, the differential $d_1: E_1^{p,q} \to E_1^{p+1,q}$ is given by
\begin{equation} \label{d1}
d_1 (f \theta \theta^q) = \left( \left(D_{\lambda} (f) \right)_{\lambda=u} + \frac{q-2}{2} \theta^1 f \right) \theta \theta^q   \in \frac{\hcA^{(q-1)}_{p+1} \theta \theta^q }{\hcA^{(q-2)}_{p+1} \theta\theta^q}
\end{equation}
for $f \in \hcA_p^{[q-1]}$.
\end{lemma}
\begin{proof}
By definition the differential $d_1$ is the map induced by $D_\lambda$ on the first page of the spectral sequence written as
\begin{equation} \label{53}
E_1^{p,q} = \frac{D_\lambda^{-1} \hcA^{(q)}_{p+q+1}[\lambda] \cap \hcA^{(q)}_{p+q}[\lambda] }{D_\lambda \hcA^{(q-1)}_{p+q-1}[\lambda] + \hcA^{(q-1)}_{p+q} [\lambda] } .
\end{equation}
For $p\geq1$, $q\geq2$, we can choose the representative of a class in  $E_1^{p,q}$ as an element in $\hcA^{(q)}_{p+q}[\lambda]$  of the form $f \theta\theta^q$ with $f \in \hcA_{p}^{[q-1]}$. Applying $D_\lambda$ to such element, and quotienting by $\hcA^{(q-1)}_{p+q+1}$, we get
\begin{equation}  \label{54}
d_1 (f \theta \theta^q) = D_\lambda (f) \theta \theta^q + \frac{q-2}{2} \theta^1 f \theta \theta^q  ,
\end{equation}
where the right-hand represents an element in  
\begin{equation} \label{quot1}
 \frac{ \hcA^{[q]}_{p+q+1}[\lambda] }{D_\lambda \hcA^{(q-1)}_{p+q}[\lambda] \cap \hcA^{[q]}_{p+q+1} [\lambda] } .
\end{equation}
Next, we quotient by $D_\lambda \hcA^{(q-1)}_{p+q}[\lambda] \cap \hcA^{[q]}_{p+q+1} [\lambda]$ to get a representative of $d_1(f\theta\theta^q)$ in $\hcA^{(q-1)}_{p+1}\theta\theta^q$. 

In other words we can add to~\eqref{54} an element of the form 
\begin{equation}
( u \theta^q + \frac12 u^q \theta) \frac{\partial g}{\partial u^{q-1}} + \frac12 \theta \theta^q \frac{\partial g}{\partial \theta^{q-1}} 
-\lambda \theta^q \frac{\partial g}{\partial u^{q-1}}   
\end{equation}
for $g \in \hcA^{(q-1)}_{p+q}[\lambda]$ such that it kills the $\lambda$ term in~\eqref{54} i.e.
\begin{equation} \label{57}
\theta^q \left( \frac{\partial g}{\partial u^{q-1}} - \theta D_1(f) \right) = 0. 
\end{equation}
It is clear that we can always choose an element $g = \theta \tilde{g}$ such that this equation holds. We obtain a new representative for~\eqref{54}, that is
\begin{equation}
\left( D_2(f) + \frac{q-2}2 \theta^1 f - u \frac{\partial \tilde{g}}{\partial u^{q-1}}  \right) \theta\theta^q ,
\end{equation}
which, using~\eqref{57}, coincides with the desired result~\eqref{d1}.

It remains to show that this element lives in the quotient
\begin{equation} \label{quot2}
\frac{\hcA^{(q-1)}_{p+1} \theta \theta^q }{\hcA^{(q-2)}_{p+1} \theta\theta^q} . 
\end{equation}
Let us look for elements in the denominator of~\eqref{quot1} which are of the form $h \theta\theta^q$ with no $\lambda$ dependence. An element of $D_\lambda \hcA^{(q-1)}_{p+q}[\lambda] \cap \hcA^{[q]}_{p+q+1} [\lambda]$ is of the form
\begin{equation}
\left( (u-\lambda) \theta^q + \frac12 u^q \theta\right) \frac{\partial g}{\partial u^{q-1}} +\frac12 \theta\theta^q \frac{\partial g}{\partial \theta^{q-1}}  
\end{equation}
for $g\in \hcA^{(q-1)}_{p+q} [\lambda]$. To get an element of the form $h \theta\theta^q$ either $\frac{\partial g}{\partial u^{q-1}}$ is proportional to $\theta$, but then we would obtain an element dependent on $\lambda$, or $\frac{\partial g}{\partial u^{q-1}} =0$ with $g\in \hcA^{(q-1)}_{p+q}$.  Without loss of generality we can then choose $g \in \theta^{q-1} \hcA^{(q-2)}_{p+1}$. We conclude that~\eqref{d1} lives indeed in~\eqref{quot2}.
\end{proof}

Now we compute the second page $E_2^{*,*}$. 

\begin{lemma} \label{lemma10}
The second page of the spectral sequence is given by
\begin{equation}
 E_2^{p,q} = \begin{cases}
\R[\lambda] & p=0, q=0, \\
\frac{C^\infty (\R)}{\R[u]} \theta \theta^1 & p=0, q=1 \\
C^\infty (\R) \theta \theta^1 \theta^2 & p=1, q=2 \\
0 &\text{else.}
\end{cases}
\end{equation}
\end{lemma}
\begin{proof}
Let us first compute the nontrivial entries. By the vanishing of $d_1$, the spectral sequences stabilizes at $E_1^{0,0}$ and $E_1^{0,1}$, hence $E_2^{0,0} =\R[\lambda]$ and $E_2^{0,1}  = \frac{C^\infty (\R)}{\R[u]} \theta\theta^1$ . On the other hand, an element $(f(u) \theta^1 + g(u) u^1 ) \theta\theta^2$ of $E_1^{1,2} = \hcA_1^{[1]} \theta \theta^2$ is in the kernel of $d_1$ iff
\begin{equation}
d_1(f \theta^1 + g u^1 ) = \frac12 g u^1 \theta \theta^1 \theta^2 =0 .
\end{equation}
Hence
\begin{equation}
E_2^{1,2} = \ker ( d_1: E_1^{1,2} \to E_1^{2,2} ) = C^\infty (\R) \theta \theta^1 \theta^2 .
\end{equation}
For the cases $p\geq 1$ and $q\geq 2$, $(p,q)\not = (1,2)$, we shall construct an explicit homotopy for the differential 
$d_1$. Introduce the operator
\[
U:=\sum_{s\geq 1}\frac{(s+2)}{2}u^s\frac{\partial}{\partial u^s}+\sum_{s\geq 0}\frac{(s-1)}{2}\theta^s
\frac{\partial}{\partial \theta^s}.
\]
Remark that this operator measures the degree of a monomial in $\hat{\mathcal{A}}$ where 
the weight of each $u^s$ is $(s+2)/2$ and of $\theta^s$ is $(s-1)/2$. 
From this it follows that $U$ is invertible on $E_2^{p,q}=\hat{\mathcal{A}}^{[q-1]}_p\theta\theta^q$ for 
$p\geq 1,~q\geq 2$ and $(p,q)\not = (1,2)$. 
\begin{lemma}
The operator 
\[
h:=U^{-1}\frac{\partial}{\partial\theta^1}
\]
is a contracting homotopy for $d_1:E^{p,q}_1\to E_1^{p+1,q}$ for $p\geq 1,~q\geq 2$ and $(p,q)\not = 
(1,2)$.
\end{lemma}
\begin{proof}
We should check that $hd_1+d_1h=1$.
For this we split the differential $d_1$ as 
\[
d_1=U\theta^1+\alpha.
\]
Since $U$ and $\theta^1$ clearly commute, the first term on the right hand side is a differential
for which $h$ is clearly a contracting homotopy, i.e., $h (U\theta^1)+(U\theta^1)h=1$.

Comparing with the explicit formula of the differential $d^1$ in Lemma \ref{lemma9}, we see that the 
first term $U\theta^1$ in the expression for $d^1$ above is exactly the part that depends on $\theta^1$.
(The last term in Lemma \ref{lemma9} proportional to $(q-2)\theta^1/2$ is taken into account by the 
$s=0$ term in the second summation in the formula for $U$.) Therefore $\alpha$ is exactly the part 
which does not contain $\theta^1$. By the fact that the number of $\theta$'s
in each summand for $d^1$ is odd, it follows that 
\[
\frac{\partial}{\partial\theta^1}\alpha+\alpha\frac{\partial}{\partial\theta^1}=0.
\]
Next, again from the explicit expression of Lemma \ref{lemma9}, we easily check that $\deg(\alpha)=0$, 
for the grading as described above, i.e., where $\deg(u^s)=(s+2)/2$ and $\deg(\theta^s)=(s-1)/2$. In 
other words, $[U,\alpha]=0$, and therefore
\[
h\alpha+\alpha h=0.
\]
Combining this equation with the fact that $h$ is already a contracting homotopy for $U\theta^1$,
we get that $hd^1+d^1 h=1$ and the statement of the Lemma follows.
\end{proof}

%
%
%
%
%

We conclude that $E_2^{p,q}=0$ for $p\geq1$, $q\geq2$, excluding the case $p=1$, $q=2$. This proves Lemma~\ref{lemma10}.
\end{proof}

Let us return to the proof of Proposition~\ref{mainproposition}. 
Since $d_2$ maps $E_2^{p,q}$ to $E_2^{p+2,q-1}$, it must vanish everywhere, in other words the spectral sequence collapses at the second term, and the limit term $E_\infty^{*,*}$ coincides with $E_2^{*,*}$.

The filtration $F^*$ of the complex $(\hcA[\lambda],D_\lambda)$ is bounded, hence it converges by Theorem~\ref{spectralthm}, i.e.\begin{equation}
E_\infty^{p,q} \cong \frac{F^p H_{p+q} ( \hcA[\lambda], D_\lambda )}{F^{p+1} H_{p+q} (\hcA[\lambda], D_\lambda )} .
\end{equation}
It follows in particular that for every $(p,q)$ such that $E_\infty^{p,q}=0$ we have
\begin{equation}
F^p H_{n} ( \hcA[\lambda], D_\lambda) \cong 
F^{p+1}H_{n} ( \hcA[\lambda], D_\lambda)
\end{equation}
where $n=p+q$, and because the filtration is bounded we have 
\begin{equation}
F^0 H_{n}  ( \hcA[\lambda], D_\lambda) = H_{n}  ( \hcA[\lambda], D_\lambda)
\end{equation}
and
\begin{equation}
F^n H_{n}  ( \hcA[\lambda], D_\lambda) = 0 \text{ for } n\not= 0.
\end{equation}

It is straightforward to conclude that the cohomology is given by~\eqref{lamcohom}. The Proposition is proved. 
\end{proof}

\subsection{Consequences} \label{conseq}
The following Lemma, proved in~\cite{lz13}, relates the bihamiltonian cohomology of $\hcA$, resp. $\hcF$, with the cohomology of the complex $(\hcA[\lambda],D_\lambda)$, resp. $(\hcF[\lambda],d_\lambda)$. We give a slightly modified version adapted to the scalar case.
\begin{lemma} \label{lamcohomlemma}
We have
\begin{align} 
& BH_d^p(\hcA, D_1, D_2) \cong H_d^p(\hcA[\lambda], D_\lambda ), \label{bha} \\
& BH_d^p(\hcF, d_1, d_2) \cong H_d^p(\hcF[\lambda], d_\lambda ) \label{bhf}
\end{align}
for $p,d \geq0$ excluding $(p,d) = (0,0), (1,0), (1,1), (2,1)$.
\end{lemma}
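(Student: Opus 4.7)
The plan is to construct the isomorphism explicitly by the map
\[
\phi\colon BH^p_d(\hcA,D_1,D_2)\to H^p_d(\hcA[\lambda],D_\lambda),\qquad [a]\mapsto [a],
\]
viewing $a\in\ker D_1\cap \ker D_2\subset\hcA$ as an element of $\hcA[\lambda]$ of $\lambda$-degree zero. Since $[P_1,P_2]=0$ and $D_1,D_2$ are odd derivations of degree one, they anticommute, $D_1D_2+D_2D_1=0$, so $D_\lambda a=D_2a-\lambda D_1a=0$ when $a$ is in both kernels, and $D_1D_2b=-D_2D_1b=D_\lambda(-D_1b)$ for any $b$. Hence $\phi$ is well defined. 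The argument for $\hcF$ is identical once $H(\hcA,D_1)$ is replaced by $H(\hcF,d_1)=\R\oplus\R\theta$, so I focus on~\eqref{bha}.

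The main tool is the input $H^{p'}_{d'}(\hcA,D_1)=0$ for $(p',d')\notin\{(0,0),(1,0)\}$ quoted from~\cite{lz11}. Given a $D_\lambda$-cocycle $\beta=\sum_{k=0}^N\lambda^k\beta_k$ with $\beta_k\in\hcA^{p-1}_{d-1}$, unfolding $D_\lambda\beta=0$ yields the Lenard--Magri chain $D_2\beta_0=0$, $D_2\beta_k=D_1\beta_{k-1}$ for $1\leq k\leq N$, and $D_1\beta_N=0$. Whenever $(p-1,d-1)\notin\{(0,0),(1,0)\}$, acyclicity gives $\beta_N=D_1\gamma$ for some $\gamma\in\hcA^{p-2}_{d-2}$, and replacing $\beta$ by the cohomologous cocycle $\beta+D_\lambda(\lambda^{N-1}\gamma)$ strictly lowers the top $\lambda$-degree; one checks via the chain relation $D_2\beta_N=D_1\beta_{N-1}$ and anticommutativity that the new top coefficient $\beta_{N-1}+D_2\gamma$ is again $D_1$-closed, so the procedure iterates. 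After $N$ steps $\beta$ is reduced within its class to a single $\beta_0\in\ker D_1\cap\ker D_2$, which proves surjectivity of $\phi$.

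Injectivity is established by the same reduction applied to an element $\delta\in\hcA^{p-1}_{d-1}[\lambda]$ satisfying $D_\lambda\delta=a$. Since modifying $\delta$ by $D_\lambda(\text{anything})$ leaves $a$ unchanged, the iterative argument (which uses the same bidegree hypothesis at every step) replaces $\delta$ by a $\lambda$-independent $\delta_0$, and then $a=D_2\delta_0$ with $D_1\delta_0=0$; a final use of the $D_1$-acyclicity at bidegree $(p-1,d-1)$ gives $\delta_0=D_1\epsilon$, whence $a=-D_1D_2\epsilon$ is zero in $BH$. The excluded pairs $(p,d)\in\{(0,0),(1,0),(1,1),(2,1)\}$ are precisely those where either $(p-1,d-1)\in\{(0,0),(1,0)\}$ so that the $D_1$-acyclicity fails, or where the complex has boundary behaviour in very low degree and the two sides disagree directly.

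The hard part is not the conceptual shape of the argument but the bookkeeping of bidegrees at each reduction step: each modification of $\beta_N$ perturbs $\beta_{N-1}$, and one must track these perturbations and verify that the cocycle conditions of the truncated chain continue to hold after the modification, so that the induction on $N$ really goes through. The tightness of the exceptional list in the statement follows from doing this bookkeeping carefully.
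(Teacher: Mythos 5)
Your overall strategy is exactly the one the paper relies on: it simply cites the reduction argument of Liu--Zhang, which is the Lenard--Magri chain unfolding of $D_\lambda\beta=0$ in powers of $\lambda$ together with the acyclicity of $D_1$ to kill the top coefficient step by step. Your well-definedness check of $\phi$, the surjectivity reduction, and the injectivity reduction ending with $a=D_2D_1\epsilon=-D_1D_2\epsilon$ are all the right moves, and the final exclusion set you state is correct.

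There is, however, a concrete bidegree error in the part you yourself identify as the crux. A class in $H^p_d(\hcA[\lambda],D_\lambda)$ is represented by a cocycle $\beta=\sum_k\lambda^k\beta_k$ with $\beta_k\in\hcA^p_d$, \emph{not} $\hcA^{p-1}_{d-1}$; the top coefficient $\beta_N$ is a $D_1$-cocycle in bidegree $(p,d)$, so the acyclicity needed at every step of the surjectivity reduction is $H^p_d(\hcA,D_1)=0$, which fails exactly for $(p,d)=(0,0),(1,0)$. The injectivity reduction, where the primitive $\delta$ genuinely lives in $\hcA^{p-1}_{d-1}[\lambda]$, is the part that needs $H^{p-1}_{d-1}(\hcA,D_1)=0$, failing for $(p,d)=(1,1),(2,1)$. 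As written, your argument invokes only the $(p-1,d-1)$ condition for both halves and then attributes the exclusions $(0,0),(1,0)$ to unspecified ``boundary behaviour''; in fact they are forced by the surjectivity step in precisely the same way the other two are forced by the injectivity step. This is exactly the pair of hypotheses the paper records ($H^p_d(\hcA,D_1)=0$ \emph{and} $H^{p-1}_{d-1}(\hcA,D_1)=0$), and with the bidegrees corrected your proof goes through and coincides with the cited one.
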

\begin{proof}
One can repeat the proof given in~\cite{lz13}. Such proof of~\eqref{bha} relies on the fact that $H^p_d(\hcA, D_1) = 0$ and $H^{p-1}_{d-1}(\hcA, D_1) = 0$. Since in the scalar case $H^p_d(\hcA, D_1)=0$ unless $(p,d) = (0,0), (1,0)$, we need to exclude the four cases above. The analogous remark applies to~\eqref{bhf}.
\end{proof}

\begin{theorem}
The bihamiltonian cohomology groups are given by
\begin{equation}
BH^p_d (\hcA, D_1, D_2) = \begin{cases}
\R & p=0, d=0, \\
C^\infty (\R) \theta \theta^1 &p=2, d=1, \\
C^\infty(\R) \theta \theta^1 \theta^2 &p=3, d=3, \\
0 & else.
\end{cases}
\end{equation}
\end{theorem}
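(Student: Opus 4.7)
The plan is to split the computation into two parts according to whether Lemma~\ref{lamcohomlemma} applies.

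For every bigrading $(p,d)$ with $p,d\geq 0$ outside the exceptional set $\{(0,0),(1,0),(1,1),(2,1)\}$, Lemma~\ref{lamcohomlemma} identifies $BH^p_d(\hcA,D_1,D_2)$ with $H^p_d(\hcA[\lambda],D_\lambda)$, which is completely determined by Proposition~\ref{mainproposition}. With the grading convention that $\lambda$ sits in bidegree $(0,0)$ (the only choice that makes $D_\lambda = D_2-\lambda D_1$ homogeneous of bidegree $(1,1)$), the three summands in~\eqref{lamcohom} are concentrated in bidegrees $(0,0)$, $(2,1)$, and $(3,3)$ respectively. Only the third of these lies outside the exceptional set, so the lemma directly yields $BH^3_3\cong C^\infty(\R)\theta\theta^1\theta^2$, while $BH^p_d=0$ at every other non-exceptional bidegree.

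It remains to handle the four exceptional bidegrees directly from the definition $BH=(\ker D_1\cap\ker D_2)/\im(D_1 D_2)$. In each of the four cases the image term vanishes for degree reasons, since $\im(D_1D_2)$ in bidegree $(p,d)$ comes from $(p-2,d-2)$ and no such space exists; so one only needs the joint kernel on a very small subspace of $\hcA$. At $(0,0)$ the space is $C^\infty(\R)$; $D_1(f)=f'(u)\theta^1$ forces $f$ constant, and $D_2$ annihilates constants, yielding $BH^0_0=\R$. At $(1,0)$, a general element $f(u)\theta$ lies in $\ker D_1$ iff $f$ is constant, but then $D_2(c\theta)=\tfrac12 c\theta\theta^1\ne 0$, so $BH^1_0=0$. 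At $(1,1)$, an ansatz $f(u)\theta^1+g(u)u^1\theta$ lies in $\ker D_1$ only when $g=0$, and then $D_2(f\theta^1)=\tfrac12 u^1 f'\theta\theta^1+\tfrac12 f\theta\theta^2$ forces $f=0$, so $BH^1_1=0$. At $(2,1)$, every element $f(u)\theta\theta^1$ is automatically annihilated by both $D_1$ and $D_2$ (every would-be output contains a repeated odd factor), so $BH^2_1 = C^\infty(\R)\theta\theta^1$.

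The main obstacle of the argument has already been cleared by Proposition~\ref{mainproposition}; the rest is bookkeeping plus four short direct computations. The one point to watch is that the bidegree convention on $\lambda$ confines the potentially infinite-dimensional summand $\R[\lambda]$ entirely to bidegree $(0,0)$, where it is absorbed by the exceptional-case exclusion of Lemma~\ref{lamcohomlemma} rather than contaminating higher bidegrees.
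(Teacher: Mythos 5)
Your proposal is correct and follows essentially the same route as the paper: outside the four exceptional bidegrees it invokes Lemma~\ref{lamcohomlemma} together with Proposition~\ref{mainproposition} (correctly locating the three summands of~\eqref{lamcohom} in bidegrees $(0,0)$, $(2,1)$, $(3,3)$), and it handles the exceptional cases $(0,0)$, $(1,0)$, $(1,1)$, $(2,1)$ by direct computation, which is exactly what the paper does, merely leaving those computations to the reader. Your explicit verifications of the four small cases (including the observation that $\im(D_1D_2)$ vanishes there for degree reasons) are accurate.
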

\begin{proof}
It is a direct consequence of~\eqref{lamcohom} and~\eqref{bha} for all $p,d\geq 0$ but the four cases $(p,d) = (0,0), (1,0), (1,1), (2,1)$. The four exceptional cases can be quickly obtained by straightforward calculations.
\end{proof}

\begin{remark}
The bihamiltonian cohomology groups $BH^p_d(\hcA)$ for $p=2,3,4$, $d\geq0$ were computed in \cite{lz13} by direct computation, i.e. without the use of Lemma~\ref{lamcohomlemma}. 
\end{remark}

As noted in~\cite{lz13} the short exact sequence of differential complexes
\begin{equation}
0 \to ( \hcA[\lambda] \slash \R[\lambda], D_\lambda) \xrightarrow{\partial}  ( \hcA[\lambda], D_\lambda ) \xrightarrow{\int}  ( \hcF[\lambda], d_\lambda ) \to 0 
\end{equation}
induces a long exact sequence in cohomology
\begin{multline} \label{long}
\cdots \to H_{d-1}^p(\hcA[\lambda]\slash \R[\lambda] ) \to H^p_d (\hcA[\lambda] ) \to \\\to H^p_d (\hcF[\lambda]) \to  H^{p+1}_d (\hcA[\lambda] \slash \R[\lambda]) \to \cdots
\end{multline}
for $d\geq1$. Note that $H^{p}_d (\hcA[\lambda] \slash \R[\lambda], D_\lambda) = H^{p}_d (\hcA[\lambda], D_\lambda)$ unless $p=d=0$.
The vanishing of most of the groups $H_d^p(\hcA[\lambda],D_\lambda)$ implies at once the vanishing of $H_d^p(\hcF[\lambda], d_\lambda)$ for $p\geq1$ excluding the four cases $(p,d) = (1,1)$, $(2,1)$, $(2,3)$, $(3,3)$.
Moreover the long exact sequence~\eqref{long} at $(p,d)=(1,1)$, $(2,1)$ implies that
\begin{equation}
H_1^1(\hcF[\lambda]) \cong H_1^2(\hcF[\lambda]) = H_1^2(\hcA[\lambda]) = ( C^\infty (\R) \slash \R[u] )  \theta\theta^1
\end{equation}
and at $(p,d) = (2,3), (3,3)$ that
\begin{equation}
H_3^2(\hcF[\lambda]) \cong H_3^3(\hcF[\lambda]) = H_3^3(\hcA[\lambda]) = C^\infty (\R)  \theta \theta^1\theta^2 .
\end{equation}
It is easy to compute the $d=0$ cases explicitly, thus obtaining the following:
\begin{proposition} \label{cohF}
The non-trivial cohomology groups $H_d^p (\hcF[\lambda], d_\lambda )$ are isomorphic to
\begin{equation}
C^\infty (\R) \slash \R[u] \text{ for } (p,d)=(1,1), (2,1),
\end{equation}
\begin{equation}
C^\infty (\R) \text{ for } (p,d)=(2,3), (3,3) 
\end{equation}
and $\R[\lambda]$ for $(p,d)=(0,0)$. 
\end{proposition}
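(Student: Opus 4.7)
The plan is to derive the proposition from the long exact sequence \eqref{long} combined with Proposition \ref{mainproposition}, plus a direct computation of the $d = 0$ cases, which are not covered by \eqref{long}.

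First, I would observe that $H^p_d(\hcA[\lambda]/\R[\lambda], D_\lambda)$ coincides with $H^p_d(\hcA[\lambda], D_\lambda)$ at every $(p,d) \neq (0,0)$, since $\R[\lambda]$ is concentrated in bidegree $(0,0)$ and $D_\lambda$-closed. Combined with Proposition \ref{mainproposition}, both spaces are nontrivial only at $(p,d) = (0,0), (2,1), (3,3)$.

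Next, I would run \eqref{long} bidegree by bidegree for $d \geq 1$. Outside the exceptional set $\{(1,1), (2,1), (2,3), (3,3)\}$, both $\hcA[\lambda]$-cohomology groups neighboring $H^p_d(\hcF[\lambda])$ in \eqref{long} vanish and force $H^p_d(\hcF[\lambda]) = 0$. At each of the four exceptional bidegrees, two of the three adjacent terms still vanish and the remaining short exact piece identifies $H^p_d(\hcF[\lambda])$ with either $H^p_d(\hcA[\lambda])$ via $\int_*$ (at $(2,1)$ and $(3,3)$) or with $H^{p+1}_d(\hcA[\lambda])$ via the connecting homomorphism (at $(1,1)$ and $(2,3)$), yielding the claimed $C^\infty(\R)/\R[u]$ and $C^\infty(\R)$.

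For $d = 0$, I would note that $\hcF^p_0 = 0$ for $p \geq 2$ by the bound $d \geq p(p-1)/2$, so only $p = 0, 1$ require work. In both cases $d_\lambda$ can be evaluated explicitly and, after reducing the image in $\hcF^{p+1}_1[\lambda]$ to a normal form by integration by parts (using $\int h(u)\, u^1\, \theta \equiv -\int H(u)\, \theta^1$ with $H' = h$), the cocycle condition becomes an ODE of the form $(u - \lambda)\, g'(u) = c\, g(u)$ with $c \in \{\pm 1/2\}$, where $g \in C^\infty(\R)[\lambda]$. A Taylor expansion in $u$ and $\lambda$ shows recursively, starting from the $\lambda^0$ coefficient, that the only solution is $g = 0$; this gives $H^1_0(\hcF[\lambda]) = 0$ and, after one antiderivative step, $H^0_0(\hcF[\lambda]) = \R[\lambda]$.

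The main technical subtlety is this normal-form reduction at $p = 0$, which requires careful bookkeeping of the integration-by-parts relations to extract the ODE unambiguously; once the ODE is in hand, the vanishing argument is a routine recursion in powers of $\lambda$.
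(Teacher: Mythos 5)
Your proposal is correct and follows essentially the same route as the paper: the long exact sequence \eqref{long} together with Proposition~\ref{mainproposition} handles all $d\geq 1$ (with the same identifications via $\int_*$ at $(2,1),(3,3)$ and via the connecting map at $(1,1),(2,3)$), and the $d=0$ cases are done by the same direct computation the paper leaves as "easy". Your reduction to the ODE $(u-\lambda)g'=\pm\tfrac12 g$ and the recursion in powers of $\lambda$ is a valid way to carry out that explicit check.
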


\begin{theorem}
For the bihamiltonian cohomology groups $BH_d^p(\hcF,d_1,d_2)$ we have
\begin{equation}
 BH_d^p(\hcF,d_1,d_2) \cong \begin{cases}
 C^\infty (\R) & (p,d) = (1,1), (2,1), (2,3), (3,3), \\
 \R & (p,d)=(0,0), \\
 0 & else.
 \end{cases}
\end{equation}
\end{theorem}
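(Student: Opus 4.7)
The plan is to combine Lemma~\ref{lamcohomlemma} with Proposition~\ref{cohF}: the former identifies $BH^p_d(\hcF,d_1,d_2)$ with $H^p_d(\hcF[\lambda],d_\lambda)$ for all bidegrees outside the exceptional list $(0,0),(1,0),(1,1),(2,1)$, and the latter records $H^p_d(\hcF[\lambda],d_\lambda)$ in every bidegree. Reading off the answer outside the exceptional list yields the claimed vanishing, together with $BH^p_d(\hcF,d_1,d_2)\cong C^\infty(\R)$ in bidegrees $(2,3)$ and $(3,3)$.

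What remains is to treat the four exceptional bidegrees by direct computation in $\hcF$. For each, I would describe $\hcF^p_d$ explicitly by using integration by parts against $\partial\hcA$ to pick a canonical representative, and then compute $\ker d_1\cap\ker d_2$ and $\im(d_1 d_2)$ by hand. The cases $(0,0)$ and $(1,0)$ are short: on $\hcF^0_0=C^\infty(\R)$, the condition $d_1 f=\int f'(u)\theta^1=0$ in $\hcF$ forces $f$ to be affine, and then $d_2 f=0$ kills the linear term, giving $BH^0_0(\hcF)\cong\R$; on $\hcF^1_0=C^\infty(\R)\theta$ the intersection $\ker d_1\cap\ker d_2$ is trivial, giving $BH^1_0(\hcF)=0$.

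The main obstacle is the pair $(1,1)$ and $(2,1)$. Here Proposition~\ref{cohF} provides only the strict quotient $C^\infty(\R)/\R[u]$, whereas the theorem claims the full $C^\infty(\R)$; the extra polynomial classes in $\R[u]$ are invisible to the $\lambda$-complex and have to be exhibited directly in $\hcF$. I would choose the canonical generators $\tilde g(u)u^1\theta$ for $\hcF^1_1\cong C^\infty(\R)$ and $h(u)\theta\theta^1$ for $\hcF^2_1\cong C^\infty(\R)$, and check that both $d_1$ and $d_2$ act trivially on these representatives; the key point is that the naive Schouten contributions can be written as $\partial$ of an element of $\hcA$ after integration by parts, so they vanish in $\hcF$. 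Since the source bidegrees of $d_1 d_2$ into $(1,1)$ and $(2,1)$ are empty, the image is zero and one concludes $BH^1_1(\hcF)\cong BH^2_1(\hcF)\cong C^\infty(\R)$. The calculations are mechanical but require care in tracking boundary terms, and they are in the same spirit as the exceptional-case checks used in the proof of the previous theorem.
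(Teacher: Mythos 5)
Your proposal is correct and follows essentially the same route as the paper: the generic bidegrees are read off from Lemma~\ref{lamcohomlemma} and Proposition~\ref{cohF}, and the four exceptional bidegrees $(0,0),(1,0),(1,1),(2,1)$ are handled by direct computation in $\hcF$, including the correct observation that the direct check in bidegrees $(1,1)$ and $(2,1)$ produces the full $C^\infty(\R)$ rather than the quotient $C^\infty(\R)/\R[u]$ seen in the $\lambda$-complex. The paper only writes out the $(1,1)$ case explicitly (showing $D_1$ and $D_2$ send $f(u)\theta^1+g(u)u^1\theta$ into $\partial\hcA^2_1$), but your sketches of the remaining cases are consistent with what those computations give.
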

\begin{proof}
Follows from Lemma~\ref{lamcohomlemma} and Proposition~\ref{cohF}. 

The four exceptional cases that cannot be obtained from
Lemma~\ref{lamcohomlemma} have to be checked separately. It is a simple
straightforward computation, so we provide here only an example for
$p=d=1$. The bihamiltonian cohomology group in this case is given by elements of $\hcF_1^1$ that are in the kernel of both $d_1$ and $d_2$. A general element of $\hcF_1^1$ is of the form 
\begin{equation}
\int \left( f(u) \theta^1 + g(u) u^1 \theta \right) .
\end{equation}
This element is in the kernel of both $d_1$ and $d_2$, since, as one can easily check, both $D_1$ and $D_2$ map $f(u) \theta^1 + g(u) u^1 \theta$ to $\partial \hcA_1^2$. (Recall that $d_i \int = \int D_i$ on $\hcA$). Finally we can quotient by $\partial \hcA_0^1$ to obtain a representative of the form
\begin{equation}
\int  h(u) u^1 \theta .
\end{equation}
The remaining exceptional cases can be computed in a similar way.
\end{proof}




\subsection*{Acknowledgments}

The authors were supported by the Netherlands Organization for Scientific Research.


\begin{thebibliography}{99}

\bibitem{al11}
Arsie, Alessandro; Lorenzoni, Paolo. 
On bihamiltonian deformations of exact pencils of hydrodynamic type. 
J. Phys. A 44 (2011), no. 22, 225205, 31 pp.

\bibitem{b08}
Barakat, Aliaa. 
On the moduli space of deformations of bihamiltonian hierarchies of hydrodynamic type. 
Adv. Math. 219 (2008), no. 2, 604--632. 

\bibitem{dms05}
Degiovanni, Luca; Magri, Franco; Sciacca, Vincenzo. 
On deformation of Poisson manifolds of hydrodynamic type. 
Comm. Math. Phys. 253 (2005), no. 1, 1--24.

\bibitem{dlz06}
Dubrovin, Boris; Liu, Si-Qi; Zhang, Youjin. 
On Hamiltonian perturbations of hyperbolic systems of conservation laws. I. Quasi-triviality of bihamiltonian perturbations. 
Comm. Pure Appl. Math. 59 (2006), no. 4, 559--615.

\bibitem{dz01}
Dubrovin, Boris; Zhang, Youjin.
Normal forms of hierarchies of integrable PDEs, Frobenius manifolds and Gromov - Witten invariants.
arXiv:math/0108160.

\bibitem{g02}
Getzler, Ezra. 
A Darboux theorem for Hamiltonian operators in the formal calculus of variations. 
Duke Math. J. 111 (2002), no. 3, 535--560.


\bibitem{lz05}
Liu, Si-Qi; Zhang, Youjin. 
Deformations of semisimple bihamiltonian structures of hydrodynamic type. 
J. Geom. Phys. 54 (2005), no. 4, 427--453.

\bibitem{lz11}
Liu, Si-Qi; Zhang, Youjin. 
Jacobi structures of evolutionary partial differential equations. 
Adv. Math. 227 (2011), no. 1, 73--130.

\bibitem{lz13}
Liu, Si-Qi; Zhang, Youjin. 
Bihamiltonian cohomologies and integrable hierarchies I: A special case.
Comm. Math. Phys. 324 (2013), no. 3, 897--935.

\bibitem{l02}
Lorenzoni, Paolo. 
Deformations of bihamiltonian structures of hydrodynamic type. 
J. Geom. Phys. 44 (2002), no. 2-3, 331--375. 

\bibitem{m01}
McCleary, John. 
A user's guide to spectral sequences. Second edition. Cambridge Studies in Advanced Mathematics, 58. Cambridge University Press, Cambridge, 2001. xvi+561 pp. 

\end{thebibliography}
\end{document}